\providecommand{\U}[1]{\protect\rule{.1in}{.1in}}
\newtheorem{theorem}{Theorem}
\newtheorem{lemma}[theorem]{Lemma}
\newtheorem{proposition}[theorem]{Proposition}
\newenvironment{proof}[1][Proof]{\noindent\textbf{#1.} }{\ \rule{0.5em}{0.5em}}
\begin{document}

\author{Nikolay Nikolov and Dan Segal}
\title{Constructing uncountably many groups with the same profinite completion}
\maketitle

To what extent is a finitely generated residually finite (f.g.r.f.) group
determined by its finite quotients? This question can be formulated in various
ways, see for example \cite{GZ}. The family of all finite quotients of a group
$G$ is determined by its inverse limit, the profinite completion $\widehat{G}%
$; following \cite{GZ} let us define the \emph{genus} of an f.g.r.f. group $G$
to be the set of isomorphism classes of f.g.r.f. groups $H$ such that
$\widehat{H}\cong\widehat{G}$. If $G$ is abelian, the genus is a singleton; if
$G$ is nilpotent, the genus is finite, a deep result of P. F. Pickel
\cite{P1}; if $G$ is metabelian the genus can be countably infinite \cite{P2}.

Uncountable genera (in fact uncountably many such) were first constructed by
Pyber \cite{P}: in that case the finite quotients are products of distinct
alternating groups. The only other examples (to our knowledge) are due to
Nekrashevych \cite{N}: here the finite quotients are $2$-groups.

It struck us that the constructions introduced in \cite{KKN} and in \cite{S}
could be adapted to yield uncountable genera.

Those of the first kind are \emph{soluble}: indeed this is the first example
of an uncountable genus of f.g. soluble groups. Our groups have derived length
four. They couldn't be metabelian, like Pickel's groups, because there are
only countably many f.g. metabelian groups; in fact our proof is more
elementary than Pickel's approach, which depends on the theory of Picard
groups. Whether a genus of f.g. soluble groups of derived length $3$ could be
uncountable seems an interesting question.

Those of the second kind, like Nekrashevych's groups, are \emph{branch
groups}. The method is easier than his, however: using perfect groups in place
of $2$-groups gives one cheap access to the relevant `congruence subgroup
property' (explained below).

Both constructions actually yield uncountably many distinct uncountable
genera; we shall not spell this out but it is implicit in the proofs.

\section{Soluble groups}

Let $G$ be the permutational wreath product $C_{2}\wr C_{2}\wr C_{\infty}$, a
three-generator soluble group of derived length $3$. We shall use the (easy)
fact that $G$ is residually finite.

The key result is

\begin{proposition}
\label{mod}There is a family of $2^{\aleph_{0}}$ pairwise non-isomorphic,
residually finite cyclic $\mathbb{Z}G$-modules all having the same finite images.
\end{proposition}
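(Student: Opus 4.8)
The plan is to realize all the required modules as dense cyclic submodules of one fixed profinite module, so that residual finiteness and ``same finite images'' are built in, and then to separate uncountably many of them by a module-theoretic invariant.

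First I would reduce the ring. The group $G$ maps onto $K:=C_2\wr C_\infty$ by killing the outermost base group $\bigoplus_K C_2$, so every $\mathbb ZK$-module inflates to a $\mathbb ZG$-module; because the $G$-action then factors through $K$, the notions ``cyclic'', ``residually finite'' and ``isomorphism type of finite quotient module'' are unchanged by the inflation. Thus it suffices to build the family over $R:=\mathbb ZK=\mathbb ZA*\langle t\rangle$, where $A=\bigoplus_{\mathbb Z}C_2$ and $t$ acts by the shift (one may reduce further to $\mathbb F_pK$ if convenient). The reason for passing to $K$ is that $R$ is very far from Noetherian --- a single cyclic $R$-module may be infinitely generated over $\mathbb Z$ --- whereas over the Noetherian quotient $\mathbb ZC_\infty=\mathbb Z[t^{\pm1}]$ there are only countably many cyclic modules; the wreathing supplies exactly the room an uncountable family needs.

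Next I would fix a suitable profinite cyclic $\widehat{\mathbb ZG}$-module $\mathfrak M$ --- obtained as a completion of an $R$-module, with $t$ acting as a topological automorphism carrying the ``shift'' built into $K$ --- together with an uncountable parameter set $P$ (think of $P$ as a power-series ring $\mathbb F_2[[s]]$, or a product $\prod_p\mathbb Z_p$, out of which $\mathfrak M$ is assembled). For each $\sigma\in P$ choose a ``twisted generator'' $v_\sigma\in\mathfrak M$ with $\widehat{\mathbb ZG}\,v_\sigma=\mathfrak M$, and set $M_\sigma:=R\,v_\sigma\subseteq\mathfrak M$. Each $M_\sigma$ is then a cyclic $R$-module, and it is residually finite because it embeds in the profinite module $\mathfrak M$ and the traces on it of the open submodules of $\mathfrak M$ already have zero intersection. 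For the ``same finite images'' clause one wants $\widehat{M_\sigma}\cong\mathfrak M$ for every $\sigma$: the surjection $\widehat{M_\sigma}\twoheadrightarrow\mathfrak M$ is clear, and the reverse --- that $M_\sigma$ has no finite quotient module beyond the ``congruence'' ones coming from open submodules of $\mathfrak M$ --- is the analogue here of the congruence subgroup property invoked for the branch-group examples. I expect this to come cheaply in the soluble setting: $\mathfrak M$ can be taken to be a completed module over a controlled quotient of $R$, through which every finite $R$-quotient of $M_\sigma$ automatically factors, so that all finite images of $M_\sigma$ are visible in $\mathfrak M$ and hence independent of $\sigma$.

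The main obstacle is the last point: showing that $M_\sigma\not\cong M_{\sigma'}$ as $\mathbb ZG$-modules for $2^{\aleph_0}$ choices of $\sigma$. Here I would extract from the $R$-module structure of $M_\sigma$ an invariant recovering $\sigma$ modulo a suitable equivalence $\sim$ on $P$, the equivalence being forced by the harmless ambiguities: replacing $v_\sigma$ by $u\,v_\sigma$ for a unit $u$ of $R$ (or of $\widehat{\mathbb ZG}$ preserving $M_\sigma$), and applying the automorphisms of $R$ that respect the structure used. The work is to prove that every abstract $R$-module isomorphism $M_\sigma\xrightarrow{\sim}M_{\sigma'}$ is of this restricted form --- i.e. that the generator $v_\sigma$ and its annihilator left ideal are rigid enough --- and then that $R^\times$ and $\mathrm{Aut}(R)$ are far too small to collapse $P$, so that $P/{\sim}$ still has cardinality $2^{\aleph_0}$. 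A counting (or Baire-category) argument then produces the $2^{\aleph_0}$ pairwise non-isomorphic, residually finite cyclic $\mathbb ZG$-modules with the same finite images.
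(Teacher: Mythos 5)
Your opening reduction is the critical flaw. You propose to kill the outermost base group $V=\bigoplus C_2$ of $G=C_2\wr C_2\wr C_\infty$ and build the family over $R=\mathbb{Z}K$ with $K=C_2\wr C_\infty$. But if one had $2^{\aleph_0}$ pairwise non-isomorphic residually finite cyclic $\mathbb{Z}K$-modules with the same finite images, the split extensions $M\rtimes K$ would be $4$-generator residually finite \emph{metabelian-by-abelian}, i.e.\ derived length $\le 3$, soluble groups with a common profinite completion --- and the paper explicitly flags ``Whether a genus of f.g.\ soluble groups of derived length $3$ could be uncountable'' as an open question it cannot answer. In fact the paper's construction is built precisely on the data you would be discarding: the modules $M_\lambda$ are generated inside $\prod_i\mathbb{F}_{p_i}G/(H_{\lambda,i}-1)\mathbb{F}_{p_i}G$ where the $H_{\lambda,i}$ are an ascending chain of finite subgroups of $V$ with union $V$, and the ``same finite images'' step works by conjugating these chains by elements of $\langle a_1,\dots,a_n\rangle\le G$ that normalize $V$; none of this survives passage to $G/V\cong K$. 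So the heuristic that $\mathbb{Z}K$ is ``non-Noetherian enough'' does not substitute for an actual construction, and the reduction as stated is not available.

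Beyond that, the two places where you acknowledge work to be done are exactly the hard parts and your strategy for them is either wishful or needlessly strong. For ``same finite images'' you hope a congruence-subgroup-type statement comes cheaply so that $\widehat{M_\sigma}$ is a fixed profinite module $\mathfrak M$ independent of $\sigma$; the paper does \emph{not} fix an ambient profinite module, but instead proves a concrete lemma (its Lemma~\ref{isoquot}): if $N=N^G\le V$ has finite index and $NH_{\gamma,i}=NH_{\beta,i}$ for all $i$ then $M_\gamma/M_\gamma(N-1)\cong M_\beta/M_\beta(N-1)$, and then uses an explicit $g\in G$ moving one chain onto another on an initial segment. That argument genuinely uses the arithmetic of the annihilator ideals $J_\lambda=\bigcap_i\bigl((H_{\lambda,i}-1)\mathbb{Z}G+p_i\mathbb{Z}G\bigr)$ over \emph{distinct primes} $p_i$, a device absent from your sketch. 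For distinguishing the modules, your plan to prove a rigidity theorem --- that every $R$-isomorphism $M_\sigma\to M_{\sigma'}$ comes from $R^\times$ and $\mathrm{Aut}(R)$ --- is far more than is needed and would be quite hard; the paper instead checks that $\lambda\mapsto J_\lambda$ is injective (reading off $\lambda(n)$ from whether $p_1\cdots p_{2n-2}(e_n-1)\in J_\lambda$) and then observes that for fixed $\alpha$ only countably many $\beta$ can satisfy $\mathbb{Z}G/J_\beta\cong\mathbb{Z}G/J_\alpha$, since there are only countably many epimorphisms from $\mathbb{Z}G$ onto the countable module $\mathbb{Z}G/J_\alpha$. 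You do gesture at a counting argument in your last sentence, which is the right idea, but the rigidity framing is a detour. In short: the abstract ``dense cyclic submodules of a fixed $\mathfrak M$'' picture is an accurate \emph{a posteriori} description of what the paper produces, but it does not supply the construction, and the reduction to $K$ that your proposal leans on is both unsupported and, per the paper's own remarks, likely false or at least open.
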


If these modules are $M_{\alpha},$ $\alpha\in X,$ the corresponding split
extensions $\Gamma_{\alpha}=M_{\alpha}\rtimes G$ all have isomorphic profinite
completions. They are all quotients of%
\[
\Gamma^{\ast}=\mathbb{Z}G\rtimes G=\mathbb{Z}\wr G;
\]
if $M_{a}=\mathbb{Z}G/J_{\alpha}$ then $\Gamma_{\alpha}\cong\Gamma^{\ast
}/K_{\alpha}$ where $K_{\alpha}=J_{\alpha}.1<$ $\mathbb{Z}G\rtimes G$. For
each $\alpha\in X$ the set of $\beta\in X$ such that $\Gamma_{\beta}%
\cong\Gamma_{\alpha}$ is countable, since there are only countably many
epimorphisms from the $4$-generator group $\Gamma^{\ast}$ to the countable
group $\Gamma_{\alpha}$. It follows that the groups $\Gamma_{\alpha}$,
$\alpha\in X$ lie in $2^{\aleph_{0}}$ isomorphism classes. Thus we may infer

\begin{theorem}
\label{genus} There are $2^{\aleph_{0}}$ pairwise non-isomorphic $4$-generator
residually finite soluble groups of derived length $4$ with the same finite
images. They are all quotients of $\mathbb{Z}\wr(C_{2}\wr C_{2}\wr C_{\infty
})$.
\end{theorem}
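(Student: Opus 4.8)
The plan is to realize the required groups as split extensions of the modules of Proposition \ref{mod} by $G=C_{2}\wr C_{2}\wr C_{\infty}$, and then to bound the number of isomorphism classes among them by a counting argument, exploiting the fact that they are all quotients of one fixed finitely generated group. Write $X$ for the index set of Proposition \ref{mod}, so $|X|=2^{\aleph_{0}}$, put $M_{\alpha}=\mathbb{Z}G/J_{\alpha}$, and set $\Gamma_{\alpha}=M_{\alpha}\rtimes G$ and $\Gamma^{\ast}=\mathbb{Z}G\rtimes G=\mathbb{Z}\wr G$. Then $\Gamma_{\alpha}\cong\Gamma^{\ast}/K_{\alpha}$, where $K_{\alpha}=J_{\alpha}$ is viewed inside the base group $\mathbb{Z}G$ of $\Gamma^{\ast}$; this $K_{\alpha}$ is normal in $\Gamma^{\ast}$ because $J_{\alpha}$ is a $\mathbb{Z}G$-submodule, hence $G$-invariant, and the base group $\mathbb{Z}G$ is abelian.

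First I would dispose of the elementary structural claims. The group $G$ is $3$-generator (take the two generators of $C_{2}\wr C_{2}$ in the base copy indexed by $0$, together with a generator of $C_{\infty}$, which conjugates them across the whole base), and it has derived length $3$ since derived length is additive for wreath products of nontrivial groups; because $M_{\alpha}$ is a \emph{cyclic} $\mathbb{Z}G$-module it is the normal closure in $\Gamma_{\alpha}$ of a single element, so $\Gamma_{\alpha}$, and likewise $\Gamma^{\ast}$, is $4$-generator. Being an extension of the abelian group $M_{\alpha}$ by $G$, each $\Gamma_{\alpha}$ is soluble of derived length at most $4$, and it is exactly $4$ for the modules supplied by Proposition \ref{mod} (here one uses the explicit form of those modules). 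Residual finiteness is easy: an element of $\Gamma_{\alpha}$ outside $M_{\alpha}$ maps nontrivially to a finite quotient of $G$; and given $0\neq m\in M_{\alpha}$ one picks a finite-index submodule $L$ with $m\notin L$, notes that $G$ acts on the finite module $M_{\alpha}/L$ through a finite quotient $G/G_{0}$, and separates $m$ by the homomorphism $\Gamma_{\alpha}\to (M_{\alpha}/L)\rtimes (G/G_{0})$ onto a finite group.

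Next I would check that the $\Gamma_{\alpha}$ all have the same finite images, hence isomorphic profinite completions. Fix $\alpha$ and $\beta$, and let $\theta\colon \Gamma_{\alpha}\twoheadrightarrow F$ be any finite quotient. Then $L:=\ker\theta\cap M_{\alpha}$ is a finite-index $\mathbb{Z}G$-submodule of $M_{\alpha}$, and, $L$ being a submodule, it is normal in $\Gamma_{\alpha}$, so $\theta$ factors through $\Gamma_{\alpha}/L=(M_{\alpha}/L)\rtimes G$. By Proposition \ref{mod} the finite module $M_{\alpha}/L$ is also an image of $M_{\beta}$, say $M_{\beta}/L'\cong M_{\alpha}/L$ as $\mathbb{Z}G$-modules; this module isomorphism extends to a group isomorphism $(M_{\beta}/L')\rtimes G\cong (M_{\alpha}/L)\rtimes G$, so $F$ is a quotient of $\Gamma_{\beta}=M_{\beta}\rtimes G$ as well. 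By symmetry $\Gamma_{\alpha}$ and $\Gamma_{\beta}$ have precisely the same finite quotients.

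Finally I would count. Declare $\alpha\sim\beta$ when $\Gamma_{\alpha}\cong\Gamma_{\beta}$; I claim every class of this equivalence relation on $X$ is countable. If $\Gamma_{\beta}\cong\Gamma_{\alpha}$, then composing the projection $\Gamma^{\ast}\twoheadrightarrow\Gamma_{\beta}$ with such an isomorphism gives an epimorphism $\Gamma^{\ast}\twoheadrightarrow\Gamma_{\alpha}$ with kernel $K_{\beta}$; since distinct $M_{\beta}$ are non-isomorphic, the ideals $J_{\beta}$, and hence the subgroups $K_{\beta}$, are pairwise distinct, so distinct $\beta$ with $\beta\sim\alpha$ yield distinct epimorphisms $\Gamma^{\ast}\to\Gamma_{\alpha}$. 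As $\Gamma^{\ast}$ is $4$-generator and $\Gamma_{\alpha}$ is countable, there are at most $\aleph_{0}$ homomorphisms $\Gamma^{\ast}\to\Gamma_{\alpha}$ altogether, so the $\sim$-class of $\alpha$ is countable. Therefore $X$ splits into $2^{\aleph_{0}}$ classes, and choosing one representative per class produces $2^{\aleph_{0}}$ pairwise non-isomorphic groups $\Gamma_{\alpha}$, each a $4$-generator residually finite soluble group of derived length $4$, each a quotient of $\mathbb{Z}\wr G$, and all with the same finite images. The substantive work is all contained in Proposition \ref{mod}; granted that, the only mildly delicate point above is the reduction, in the third paragraph, of an arbitrary finite quotient of the semidirect product $M_{\alpha}\rtimes G$ to module data, with everything else being routine bookkeeping and cardinal arithmetic.
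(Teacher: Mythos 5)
Your argument is correct and follows essentially the same route as the paper: you form the split extensions $\Gamma_{\alpha}=M_{\alpha}\rtimes G$ as quotients of $\Gamma^{\ast}=\mathbb{Z}\wr G$ and bound each isomorphism class by noting there are only countably many epimorphisms from the $4$-generator group $\Gamma^{\ast}$ onto the countable group $\Gamma_{\alpha}$, which is exactly the paper's counting argument. The additional verifications you spell out (that the semidirect products inherit the same finite images from the modules, residual finiteness, generator counts) are details the paper leaves implicit, and your derived-length-$4$ claim is asserted at the same level of detail as in the paper.
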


Let us set up some notation. Let $V$ be a vector space over $\mathbb{F}_{2}$
with basis $\{e_{i},f_{i}\ |\ i\in\mathbb{Z}\}$. Let $a\in\mathrm{GL}(V)$ be
the automorphism which swaps $e_{0}$ with $f_{0}$ and fixes the other basis
vectors. Let $t\in\mathrm{GL}(V)$ be the automorphism such that $e_{i}%
t=e_{i+1}$ and $f_{i}t=f_{i+1}$ for each $i\in\mathbb{Z}$. Then $\langle
a,t\rangle$ generate a copy of $C_{2}\wr C_{\infty}$ in $\mathrm{GL}(V)$ and
we identify $G$ with $V\rtimes\langle a,t\rangle\leq V\rtimes\mathrm{GL}(V)$.
Note that $G$ contains the elementary abelian subgroup $\langle a_{i}%
~\mid~i\in\mathbb{Z}\rangle$ where $a_{i}=a^{t^{i}}\in\mathrm{GL}(V)$ is the
automorphism of order $2$ which swaps $e_{i}$ with $f_{i}$ and fixes the other
basis vectors.

For $\lambda\in Y:=\{0,1\}^{\mathbb{N}}$ we define the sequence $\mathbf{c}%
_{\lambda}=(c_{i})_{i\in\mathbb{N}}$ by%
\begin{align*}
c_{2n-1}  &  =e_{n},~~c_{2n}=f_{n}~\text{if }\lambda(n)=0\\
c_{2n-1}  &  =f_{n},~~c_{2n}=e_{n}~\text{if }\lambda(n)=1,
\end{align*}
and an ascending chain of subgroups of $V$ by
\[
H_{\lambda,i}=\langle e_{0},f_{0},e_{-1},f_{-1},\ldots,e_{-i},f_{-i}%
,c_{1},\ldots,c_{i}\rangle.
\]

The following is then clear:

\begin{lemma}
\label{chains}\emph{(i)} $V=\bigcup_{i=1}^{\infty}H_{\lambda,i}$.

\emph{(ii)} For each $\alpha,$ $\beta\in Y$ and each $n\in\mathbb{N}$ there is
an element $g=g(\alpha,\beta,n)\in\langle a_{1},\ldots,a_{n}\rangle<G$ such
that $H_{\alpha,i}^{g}=H_{\beta,i}$ for $i=1,2,\ldots,n$.
\end{lemma}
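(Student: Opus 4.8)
The plan is to handle the two parts separately, both by direct inspection of the generating sets; write $c_j^{\alpha}$ for the $j$th term of $\mathbf c_{\alpha}$, and similarly $c_j^{\beta}$. For part (i), first observe that the chain is ascending: the listed generators of $H_{\lambda,i}$ form a subset of those of $H_{\lambda,i+1}$ (which carries the extra generators $e_{-(i+1)},f_{-(i+1)},c_{i+1}$), so $H_{\lambda,i}\le H_{\lambda,i+1}$. To get $\bigcup_i H_{\lambda,i}=V$ it then suffices to place every basis vector in some $H_{\lambda,i}$: the vectors $e_0,f_0$ and $e_{-j},f_{-j}$ with $j\ge1$ occur verbatim among the generators of $H_{\lambda,j}$, while for $j\ge1$ the two terms $c_{2j-1},c_{2j}$ of $\mathbf c_\lambda$ are $e_j,f_j$ in one order or the other, so $e_j,f_j\in H_{\lambda,2j}$. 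Hence the union contains the whole basis and is therefore all of $V$.

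For part (ii) the key observation is that $\mathbf c_\alpha$ and $\mathbf c_\beta$ coincide except that in the positions $2k-1,2k$ with $\alpha(k)\ne\beta(k)$ they list $e_k,f_k$ in opposite orders, and the element $a_k\in\mathrm{GL}(V)$ performs exactly the transposition $e_k\leftrightarrow f_k$ while fixing every other basis vector. So I would take $S=\{\,k\mid 1\le k\le n,\ \alpha(k)\ne\beta(k)\,\}$ and $g=\prod_{k\in S}a_k$; the $a_k$ have pairwise disjoint supports, hence commute, $g$ is a well-defined involution, and $g\in\langle a_1,\dots,a_n\rangle<G$. Acting with $g$ on $V$ (equivalently, conjugating inside $G$, since $V\trianglelefteq G$): each $a_k$ with $k\ge1$ fixes all basis vectors of index $\le0$, so $g$ fixes $e_0,f_0$ and every $e_{-j},f_{-j}$; and for $j\le i\le n$, putting $k=\lceil j/2\rceil\in\{1,\dots,n\}$, one checks $c_j^{\alpha}g=c_j^{\beta}$ — if $\alpha(k)=\beta(k)$ then already $c_j^{\alpha}=c_j^{\beta}$ and no factor of $g$ moves $e_k$ or $f_k$, while if $\alpha(k)\ne\beta(k)$ then $a_k$ is a factor of $g$ and swapping $e_k,f_k$ carries $c_j^{\alpha}$ to $c_j^{\beta}$. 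Reading off generators, $H_{\alpha,i}^g=\langle e_0,f_0,\dots,e_{-i},f_{-i},c_1^{\beta},\dots,c_i^{\beta}\rangle=H_{\beta,i}$, simultaneously for all $i\le n$ with the single element $g$.

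There is really no obstacle here; the only point worth a moment's care is that the factors $a_k$ of $g$ with $k$ exceeding the largest positive basis index occurring in $H_{\alpha,i}$ (namely $\lceil i/2\rceil$) do no harm — which is automatic, since such an $a_k$ fixes every generator of $H_{\alpha,i}$, whose basis-vector indices are all $\le0$ or $\le\lceil i/2\rceil<k$. With that noted, both statements are immediate, which is no doubt why the lemma is offered as ``clear''.
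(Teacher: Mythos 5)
Your proof is correct, and it is exactly the natural argument the paper has in mind when it declares the lemma ``clear'' without giving any proof: part (i) by noting the chain is ascending and every basis vector appears among the generators, and part (ii) by conjugating with $g=\prod_{k\in S}a_k$ where $S$ is the set of indices $k\le n$ at which $\alpha$ and $\beta$ disagree, using that each $a_k$ swaps $e_k\leftrightarrow f_k$ and fixes all other basis vectors. Nothing further is needed.
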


Fix an infinite sequence of distinct primes $(p_{i})_{i\in\mathbb{N}}$. We now
define a $\mathbb{Z}G$-module $M_{\lambda}$ for each $\lambda\in Y$. For
clarity, the subscript $\lambda$ will sometimes be suppressed. Let
\[
U_{\lambda,i}=U_{i}=\mathbb{F}_{p_{i}}G/(H_{i}-1)\mathbb{F}_{p_{i}}%
G\cong\mathbb{F}_{p_{i}}\otimes_{\mathbb{F}_{p_{i}}H_{i}}\mathbb{F}_{p_{i}}G;
\]
this is the right permutation $\mathbb{F}_{p_{i}}G$ module on the right cosets
$\{H_{i}g\ |\ g\in G\}$ of $H_{i}$ in $G$, and we fix the module generator
\[
u_{i}=1+(H_{i}-1)\mathbb{F}_{p_{i}}G.
\]
Now $M_{\lambda}$ is defined to be the cyclic $\mathbb{Z}G$-submodule of
$\prod_{i=1}^{\infty}U_{i}$ generated by $\mathbf{u}_{\lambda}=(u_{1}%
,u_{2},\ldots)$. Thus%
\[
M_{\lambda}\cong\mathbb{Z}G/J_{\lambda}%
\]
where%
\begin{align*}
J_{\lambda}  &  =\mathrm{ann}_{\mathbb{Z}G}(\mathbf{u}_{\lambda})\\
&  =\bigcap\limits_{i=1}^{\infty}\left(  (H_{\lambda,i}-1)\mathbb{Z}%
G+p_{i}\mathbb{Z}G\right)  .
\end{align*}

Since each $H_{i}$ is finite and $G$ is residually finite, $H_{i}$ is closed
in the profinite topology of $G$. Thus the annihilator in $\mathbb{F}_{p_{i}%
}G$ of $u_{i},$ namely $(H_{i}-1)\mathbb{F}_{p_{i}}G,$ is the intersection of
finite-codimension right ideals of the form $(L-1)\mathbb{F}_{p_{i}}G$ (here
$L$ ranges over the subgroups of finite index in $G$ that contain $H_{i}$);
hence each $U_{i}$ is residually finite as a $G$-module. It follows that
$\prod_{i=1}^{\infty}U_{i}$ is also a residually finite $G$-module, and then
so is its submodule $M_{\lambda}$.

\begin{lemma}
\label{isoquot}Let $\gamma,~\beta\in Y$ and let $N=N^{G}\leq V$. If
$NH_{\gamma,i}=NH_{\beta,i}$ \ for all $i$ then the $G$-modules $M_{\gamma
}/M_{\gamma}(N-1)$ and $M_{\beta}/M_{\beta}(N-1)$ are isomorphic.
\end{lemma}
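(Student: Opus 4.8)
The plan is to realise $M_\lambda/M_\lambda(N-1)$ as an explicit cyclic $\mathbb{Z}G$-module and to show that its defining ideal does not depend on whether $\lambda=\gamma$ or $\lambda=\beta$. Since $M_\lambda=\mathbb{Z}G/J_\lambda$ with module generator $\mathbf u_\lambda$, we have
\[
M_\lambda/M_\lambda(N-1)\;\cong\;\mathbb{Z}G/\bigl(J_\lambda+(N-1)\mathbb{Z}G\bigr)
\]
as cyclic $\mathbb{Z}G$-modules, so it suffices to prove $J_\gamma+(N-1)\mathbb{Z}G=J_\beta+(N-1)\mathbb{Z}G$. I would deduce this from the identity
\[
J_\lambda+(N-1)\mathbb{Z}G\;=\;\bigcap_{i\ge 1}\Bigl((H_{\lambda,i}N-1)\mathbb{Z}G+p_i\mathbb{Z}G\Bigr)\qquad(\lambda\in Y),
\]
whose right-hand side involves only the subgroups $H_{\lambda,i}N=NH_{\lambda,i}$ and hence, by hypothesis, agrees for $\gamma$ and $\beta$. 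Here I use that $N\trianglelefteq G$, so that $(N-1)\mathbb{Z}G$ is a two-sided ideal and $(H_{\lambda,i}-1)\mathbb{Z}G+(N-1)\mathbb{Z}G=(H_{\lambda,i}N-1)\mathbb{Z}G$.

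Writing $A_{\lambda,i}=(H_{\lambda,i}-1)\mathbb{Z}G+p_i\mathbb{Z}G$, so $J_\lambda=\bigcap_iA_{\lambda,i}$, the inclusion $\subseteq$ in the displayed identity is immediate. For $\supseteq$ I would take $r$ in the right-hand side and argue in three steps. First, since the $p_i$ are distinct primes, reducing modulo $(V-1)\mathbb{Z}G$ (which contains every $(H_{\lambda,i}N-1)\mathbb{Z}G$) places the image of $r$ in $\bigcap_ip_i\mathbb{Z}[G/V]=0$, so $r\in(V-1)\mathbb{Z}G$. Secondly, writing $r=\sum_v(v-1)r_v$ as a finite sum with $v$ ranging over a finite subset of $V$, and using $V=\bigcup_iH_{\lambda,i}$ (Lemma~\ref{chains}(i)), I obtain an index $m$ with $r\in(H_{\lambda,m}-1)\mathbb{Z}G\subseteq A_{\lambda,i}$ for all $i\ge m$. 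Thirdly, for each of the finitely many $i<m$ we have $r\equiv n_i\pmod{A_{\lambda,i}}$ with $n_i\in(N-1)\mathbb{Z}G$; the ideals $A_{\lambda,1},\dots,A_{\lambda,m-1}$ are pairwise comaximal because they contain the distinct primes $p_1,\dots,p_{m-1}$, and because $(N-1)\mathbb{Z}G$ is two-sided a finite Chinese Remainder argument inside $(N-1)\mathbb{Z}G$ yields a single $n\in(N-1)\mathbb{Z}G$ with $n\equiv r\pmod{A_{\lambda,i}}$ for all $i<m$. Then $r-n$ lies in every $A_{\lambda,i}$, i.e.\ $r\in J_\lambda+(N-1)\mathbb{Z}G$.

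The point I expect to require real care is producing the Chinese-remainder element $n$ so that it simultaneously lies in $(N-1)\mathbb{Z}G$ \emph{and} leaves the congruences $r\in A_{\lambda,i}$ intact for all $i\ge m$; equivalently, one needs $(N-1)\mathbb{Z}G$ to split compatibly against the ``tail'' ideal $\bigcap_{i\ge m}A_{\lambda,i}$. I would handle this by again invoking $V=\bigcup_iH_{\lambda,i}$ — enlarging $m$ so that $H_{\lambda,m}N$ already accounts for the relevant finite part of $N$, and applying the distinct-primes reduction modulo $(V-1)\mathbb{Z}G$ to the tail — the two-sidedness of $(N-1)\mathbb{Z}G$ being exactly what lets the comaximal decomposition survive intersection with it. Once the displayed identity is in place, the equality of the two ideals, and hence the asserted isomorphism of $G$-modules, follows at once.
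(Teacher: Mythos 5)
Your proposal is correct and rests on the same two ingredients as the paper, but it packages them differently. Like the paper, you reduce the lemma to the ideal identity $J_\gamma+(N-1)\mathbb{Z}G=J_\beta+(N-1)\mathbb{Z}G$ (the paper's (\ref{annihilators})); the difference is that you then prove the symmetric formula $J_\lambda+(N-1)\mathbb{Z}G=\bigcap_{i\ge1}\bigl((NH_{\lambda,i}-1)\mathbb{Z}G+p_i\mathbb{Z}G\bigr)$, i.e.\ you compute the annihilator of $\mathbf{u}_\lambda$ modulo the full product $\prod_i U_{\lambda,i}(N-1)$, so the right-hand side visibly depends only on the subgroups $NH_{\lambda,i}$ and the conclusion is immediate; the paper instead computes the annihilator $I_\lambda$ modulo the direct sum $\bigoplus_i U_{\lambda,i}(N-1)$, shows $I_\lambda=J_\lambda+(N-1)\mathbb{Z}G$, and then transfers the membership conditions from $\gamma$ to $\beta$ using the hypothesis. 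The hard content is identical in the two routes: the tail is forced into $(V-1)\mathbb{Z}G$ by $\bigcap_i p_i\mathbb{Z}[G/V]=0$ and then into some $(H_{\lambda,m}-1)\mathbb{Z}G$ by Lemma \ref{chains}(i), and the finitely many corrections $n_i\in(N-1)\mathbb{Z}G$ are glued using integer coefficients $q_j\equiv\delta_{ij}\ (\mathrm{mod}\ p_i)$ --- this scalar trick, rather than a Chinese remainder theorem for the (one-sided) ideals $A_{\lambda,i}$ themselves, is what actually does the work, since the $q_j$ are central and $p_i\in A_{\lambda,i}$. On the point you flag: the clean fix is to enlarge the cutoff to some $M\ge m$ so that every element of $N$ occurring in the chosen corrections $n_1,\dots,n_{m-1}$ lies in $H_{\lambda,M}$ (taking $n_i=0$ for $m\le i<M$, legitimate since $r\in A_{\lambda,i}$ there); then $n=\sum_{j<M}q_jn_j$ lies in $A_{\lambda,i}$ for all $i\ge M$ and is congruent to $r$ modulo $A_{\lambda,i}$ for $i<M$, whence $r-n\in J_\lambda$. (Your phrase ``so that $H_{\lambda,m}N$ already accounts for the relevant finite part of $N$'' should read ``so that $H_{\lambda,m}$ contains the finitely many elements of $N$ appearing in the $n_i$''; as written the condition is vacuous because $N\le H_{\lambda,m}N$ always.) This enlargement-plus-integer-congruence step is exactly the device in the paper's proof that $I_\lambda\subseteq J_\lambda+(N-1)\mathbb{Z}G$, so your route is a modest but genuine reorganization: it buys an identity that is symmetric in $\lambda$ from the outset (making the final comparison trivial), at the cost of proving the slightly stronger ``product'' version of the annihilator computation, with no tail condition to transfer.
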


\begin{proof}
It will suffice to prove the (stronger) statement%
\begin{equation}
J_{\gamma}+(N-1)\mathbb{Z}G=J_{\beta}+(N-1)\mathbb{Z}G. \label{annihilators}%
\end{equation}

Let $x\in N.$ Then for some $k$ we have $x\in H_{\gamma,i}$ and $x\in
H_{\beta,i}$ for all $i>k.$ Then $u_{\gamma,i}(x-1)=0$ and $u_{\beta
,i}(x-1)=0$ for every $i>k$. It follows that for $\lambda=\gamma,~\beta,$%
\[
\mathbf{u}_{\lambda}(x-1)=\sum\nolimits_{i=1}^{k}u_{\lambda,i}(x-1)\in
\bigoplus\nolimits_{i}U_{\lambda,i}(N-1)<\prod_{i=1}^{\infty}U_{\lambda,i}.
\]
Thus $J_{\lambda}+(N-1)\mathbb{Z}G$ maps $\mathbf{u}_{\lambda}$ into
$\bigoplus\nolimits_{i}U_{\lambda,i}(N-1)=D,$ say.

Let $I_{\lambda}$ denote the annihilator in $\mathbb{Z}G$ of $\mathbf{u}%
_{\lambda}$ modulo $D$. Suppose $z\in$ $I_{\lambda}$. Then
\[
\mathbf{u}_{\lambda}z=(u_{\lambda,i}s_{i})_{i}%
\]
with each $s_{i}\in(N-1)\mathbb{Z}G,$ and $s_{j}=0$ for all $j>m$, say. By the
preceding paragraph, there exists $k$ such that $u_{\lambda,i}s_{j}=0$ for
each $j\leq m$ and all $i>k$. Now we choose integers $q_{i}$ such that
$q_{i}\equiv\delta_{ij}~(\operatorname{mod}p_{j})$ for $i,j=1,\ldots,k$.
Taking $r=$ $\sum\nolimits_{i=1}^{k}q_{j}s_{j}$ we have%
\begin{align}
u_{\lambda,i}r  &  =u_{\lambda,i}s_{i}~~\text{if }i\leq k \label{localcondns1}%
\\
u_{\lambda,i}r  &  =0~\text{if }i>k. \label{condsn2}%
\end{align}
Thus $\mathbf{u}_{\lambda}z=\mathbf{u}_{\lambda}r$ so $z\in J_{\lambda
}+r\subseteq J_{\lambda}+(N-1)\mathbb{Z}G$.

It follows that $I_{\lambda}=J_{\lambda}+(N-1)\mathbb{Z}G.$ Thus it remains to
show that $I_{\gamma}=I_{\beta}.$

Now let $r\in I_{\gamma}$. Then (\ref{localcondns1}) and (\ref{condsn2}) hold
(with $\gamma$ for $\lambda)$, for some $k$ and some $s_{i}\in(N-1)\mathbb{Z}%
G.$

(\ref{condsn2}) is equivalent to%
\[
r\in(H_{\gamma,i}-1)\mathbb{Z}G+p_{i}\mathbb{Z}G~\ ~~\forall i>k.
\]
This implies%
\[
r\in\bigcap\limits_{i>k}\left(  (V-1)\mathbb{Z}G+p_{i}\mathbb{Z}G\right)
=(V-1)\mathbb{Z}G
\]
which in turn implies that for some $k_{1}$ we have%
\begin{equation}
r\in(H_{\beta,i}-1)\mathbb{Z}G~~\forall i>k_{1}. \label{k-one}%
\end{equation}

In (\ref{localcondns1}), we may enlarge $k$ arbitrarily by setting $s_{i}=0$
for finitely many values of $i>k$; so we may assume that $k\geq k_{1}$. Now
(\ref{localcondns1}) is equivalent to%
\begin{align*}
r  &  \in(H_{\gamma,i}-1)\mathbb{Z}G+(N-1)\mathbb{Z}G+p_{i}\mathbb{Z}G\\
&  =(NH_{\gamma,i}-1)\mathbb{Z}G+p_{i}\mathbb{Z}G\\
&  =(NH_{\beta,i}-1)\mathbb{Z}G+p_{i}\mathbb{Z}G~\ \ (i\leq k).
\end{align*}
Together with (\ref{k-one}), this shows that (\ref{localcondns1}) and
(\ref{condsn2}) hold with $\beta$ for $\lambda$, and so $r\in I_{\beta}$. The
result follows by symmetry.
\end{proof}

\bigskip

Now fix $\alpha,~\beta\in Y$. For any $\lambda\in Y$, every finite $G$-module
image of $M_{\lambda}$ is an image of $M_{\lambda}/M_{\lambda}(N-1)$ for some
subgroup $N=N^{G}$ of finite index in $V$. There exists $k$ such that
$H_{\alpha,i}N=H_{\beta,i}N=V$ for all $i>k,$ and there exists $g=g(\alpha
,\beta,k)\in G$ such that $H_{\alpha,i}^{g}=H_{\beta,i}$ for $1\leq i\leq k$.
We can specify $\gamma\in Y$ so that $H_{\gamma,i}=H_{\alpha,i}^{g}$ for all
$i$. Then $H_{\gamma,i}N=H_{\beta,i}N$ for all $i$, and Lemma \ref{isoquot}
gives%
\[
\frac{M_{\gamma}}{M_{\gamma}(N-1)}\cong\frac{M_{\beta}}{M_{\beta}(N-1)}.
\]
On the other hand,%
\[
M_{\gamma}\cong\frac{\mathbb{Z}G}{J_{\gamma}}=\frac{\mathbb{Z}G}%
{g^{-1}J_{\alpha}}\cong\frac{\mathbb{Z}G}{J_{\alpha}}\cong M_{\alpha}.
\]

It follows that $M_{\alpha}/M_{\alpha}(N-1)\cong M_{\beta}/M_{\beta}(N-1)$. We
infer that $M_{\alpha}$ and $M_{\beta}$ have the same finite images as $G$-modules.

\begin{lemma}
\label{ann} The map $\lambda\longmapsto J_{\lambda}$ ($\lambda\in Y$) is bijective.
\end{lemma}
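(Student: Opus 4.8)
The plan is to prove that $\lambda\mapsto J_\lambda$ is injective (bijectivity onto the image $\{J_\lambda:\lambda\in Y\}$ being then automatic). The crux is to recover each subgroup $H_{\lambda,j}$ from the ideal $J_\lambda$; once that is done, $\lambda$ is recovered from the combinatorics of the chain, because $H_{\lambda,2n-1}$ is, by construction, the $\mathbb{F}_2$-span of the set of basis vectors $\{e_i,f_i:-(2n-1)\le i\le n-1\}\cup\{c_{2n-1}\}$ of $V$, with $c_{2n-1}\in\{e_n,f_n\}$; hence $e_n\in H_{\lambda,2n-1}$ exactly when $c_{2n-1}=e_n$, i.e. exactly when $\lambda(n)=0$.

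First I would prove the identity
\[
J_\lambda+p_j\mathbb{Z}G=A_j,\qquad A_j:=(H_{\lambda,j}-1)\mathbb{Z}G+p_j\mathbb{Z}G,
\]
for every $j$. The inclusion $\subseteq$ is clear since $J_\lambda\subseteq A_j$ and $p_j\mathbb{Z}G\subseteq A_j$. For $\supseteq$, write an arbitrary element of $A_j$ as $h+p_jb$ with $h\in(H_{\lambda,j}-1)\mathbb{Z}G$; it suffices to show $h\in J_\lambda+p_j\mathbb{Z}G$. Since $(H_{\lambda,i})_i$ is an ascending chain, for every $i\ge j$ we have $h\in(H_{\lambda,i}-1)\mathbb{Z}G\subseteq A_i$, so $u_{\lambda,i}h=0$; thus $\mathbf{u}_\lambda h$ is supported on a finite set $F\subseteq\{1,\dots,j-1\}$. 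Choose, by the Chinese Remainder Theorem, an integer $q$ with $p_jq\equiv1\pmod{p_i}$ for all $i\in F$ --- possible because $p_j$ is prime to each of the distinct primes $p_i$ --- and note that $U_{\lambda,i}$ is killed by $p_i$; then $u_{\lambda,i}(p_jqh)=u_{\lambda,i}h$ for $i\in F$, and trivially (both sides vanish) for $i\notin F$. Hence $\mathbf{u}_\lambda(p_jqh)=\mathbf{u}_\lambda h$, so $h-p_j(qh)\in J_\lambda$ and $h\in J_\lambda+p_j\mathbb{Z}G$.

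Granting the identity, $J_\lambda$ determines $A_j=J_\lambda+p_j\mathbb{Z}G$, hence determines, for each $g\in G$, whether $g-1\in A_j$. Reducing modulo $p_j$ and applying the coset permutation map $\mathbb{F}_{p_j}G\to\mathbb{F}_{p_j}[H_{\lambda,j}\backslash G]$ (whose kernel is $(H_{\lambda,j}-1)\mathbb{F}_{p_j}G$), one sees that $g-1\in A_j$ if and only if $g\in H_{\lambda,j}$. So $J_\lambda$ determines every $H_{\lambda,j}$, whence $\lambda$ by the first paragraph, and injectivity follows.

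The only real obstacle, I expect, is the identity $J_\lambda+p_j\mathbb{Z}G=A_j$. A priori, reducing the infinite intersection $J_\lambda=\bigcap_iA_i$ modulo $p_j$ might give something strictly inside $A_j$; what makes it come out on the nose is that the mutual coprimality of the $p_i$ lets a single $p_j$-division of $\mathbf{u}_\lambda h$ be replaced by the finitely many congruences $p_jq\equiv1\pmod{p_i}$ ($i\in F$), and these are solvable only because $\mathbf{u}_\lambda h$ has finite support --- which in turn is where $h\in(H_{\lambda,j}-1)\mathbb{Z}G$ and the ascending chain come in. Everything else --- the combinatorial readout of $\lambda$ and the fact that $(L-1)\mathbb{F}_pG$ remembers the subgroup $L$ --- is routine.
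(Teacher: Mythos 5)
Your proof is correct, but it takes a genuinely different route from the paper's. The paper recovers $\lambda(n)$ from $J_\lambda$ by exhibiting a single test element: it shows directly that
\[
\lambda(n)=0 \ \Longleftrightarrow\ e_n\in H_{\lambda,2n-1}\ \Longleftrightarrow\ p_1p_2\cdots p_{2n-2}\,(e_n-1)\in J_\lambda,
\]
where the forward direction of the second equivalence uses that $p_1\cdots p_{2n-2}$ annihilates each $U_j$ with $j<2n-1$ while $e_n-1$ annihilates each $u_{\lambda,j}$ with $j\ge 2n-1$, and the reverse direction uses that $p_1\cdots p_{2n-2}$ is a unit in $\mathbb{F}_{p_{2n-1}}$. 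You instead establish the stronger ideal-theoretic identity $J_\lambda+p_j\mathbb{Z}G=(H_{\lambda,j}-1)\mathbb{Z}G+p_j\mathbb{Z}G$ for every $j$, which recovers the entire subgroup $H_{\lambda,j}$ (not just one bit per level) via the standard fact that $(L-1)\mathbb{F}_pG$ is the kernel of $\mathbb{F}_pG\to\mathbb{F}_p[L\backslash G]$. Both arguments lean on exactly the same two ingredients, namely that the ascending chain makes all but finitely many annihilator conditions trivial, and the coprimality of the $p_i$; you package them via CRT into the structural identity, whereas the paper just writes down the witness element. Your version is somewhat longer but conceptually cleaner and yields more information; the paper's is more economical for the immediate purpose. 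The combinatorial readout of $\lambda(n)$ from $H_{\lambda,2n-1}$ agrees with the paper's in both cases.
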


\begin{proof}
It suffices to show that for each $n$, $\lambda(n)$ is determined by
$J_{\lambda}$. Now fix $n$ and set $i=2n-1.$ Then
\begin{align*}
\lambda(n)=0  &  \Longleftrightarrow e_{n}\in H_{\lambda,i}\\
&  \Longleftrightarrow p_{1}p_{2}\cdots p_{i-1}(e_{n}-1)\in J_{\lambda}.
\end{align*}
To see this, observe that if $g\in H_{\lambda,i}$ then $u_{\lambda,j}(g-1)=0$
for all $j\geq i$ and $p_{1}p_{2}\cdots p_{i-1}u_{j}=0$ for all $j<i$; while
if $g\in G\smallsetminus H_{\lambda,i}$ then $u_{\lambda,i}g\neq u_{\lambda
,i}$ so $u_{\lambda,i}.p_{1}p_{2}\cdots p_{i-1}(g-1)\neq0$ since $p_{1}%
p_{2}\cdots p_{i-1}$ is invertible in $\mathbb{F}_{p_{i}}$.
\end{proof}

\bigskip

Now given $\alpha\in Y$, the set of $\beta\in Y$ such that $\mathbb{Z}%
G/J_{\beta}\cong\mathbb{Z}G/J_{\alpha}$ is countable, since for each such
$\beta$ there exists an epimorphism from $\mathbb{Z}G$ onto the countable
module $\mathbb{Z}G/J_{\alpha}$ with kernel $J_{\beta}$. As $\left\vert
Y\right\vert =2^{\aleph_{0}}$, Lemma \ref{ann} ensures that the modules
$M_{\alpha}\cong\mathbb{Z}G/J_{\alpha}$ lie in $2^{\aleph_{0}}$ isomorphism
classes, and Proposition \ref{mod} follows.

\section{\bigskip Branch groups}

For details of the following construction, see \cite{S}, \S 2 or \cite{LS},
\S 13.4. We start with a rooted tree $\mathcal{T}$, in which each vertex of
level $n\geq1$ has valency $1+l_{n}$ (and the root has valency $l_{0}$). For
each $n$ we take a permutation group $T_{n}$ of degree $l_{n},$ set
$W_{0}=T_{0},$ and for $n\geq0$ let $W_{n+1}=T_{n}\wr W_{n-1}$ be the
permutational wreath product. This acts in a natural way on the finite tree
$\mathcal{T}[n+1]$ obtained by truncating $\mathcal{T}$ at level $n+1$. Hence
the inverse limit\bigskip%
\[
W=~\underset{\leftarrow}{\lim}W_{n}%
\]
sits naturally as a subgroup of $\mathrm{Aut}(\mathcal{T})$.

Now $W$ is a profinite group, a base for the neighbourhoods of the identity
being the set of level stabilizers
\[
\mathrm{St}_{W}(n)=\ker\left(  W\rightarrow\mathrm{Aut}(\mathcal{T}%
[n])\right)  .
\]
A subgroup $\Gamma$ of $W$ is said to have the \emph{congruence subgroup
property} if the natural topology of $W$ induces the profinite topology on
$\Gamma$, that is, if every subgroup of finite index in $\Gamma$ contains
$\mathrm{St}_{\Gamma}(n)=\Gamma\cap\mathrm{St}_{W}(n)$ for some $n$. If this
holds, then the natural homomorphism $\widehat{\Gamma}\rightarrow W$ is
injective; if in addition $\Gamma$ is \emph{dense} in $W,$ it follows that
$\widehat{\Gamma}\cong W$.

On pages 262-263 of \cite{LS} we define four elements $\xi,$ $\eta,~a$ and $b$
of $W,$ set $\Gamma=\left\langle \xi,\eta,a,b\right\rangle $, and prove that
under certain conditions, $\Gamma$ is both dense and satisfies the congruence
subgroup property.

The conditions are as follows:

(i) $T_{n}$ is a doubly transitive subgroup of $\mathrm{Sym}(l_{n})$ (this
condition can be considerably weakened);

(ii) there exist a two-generator perfect group $P=\left\langle
x,y\right\rangle $ and for each $n$ an epimorphism $\phi_{n}:P\rightarrow
T_{n};$

(iii) the automorphisms $\xi,$ $\eta,~a$ and $b$ are built in a particular way
out of the%
\[
\alpha_{n}=x\phi_{n},~\beta_{n}=y\phi_{n}\in T_{n}\leq\mathrm{Sym}%
(l_{n})\text{.}%
\]
Specifically, $\xi$ and $\eta$ are `rooted automorphisms', permuting bodily
the $l_{0}$ subtrees attached to the root of $\mathcal{T}$ as $\alpha_{0},$
$\beta_{0}$ respectively; $a$ and $b$ are so-called `directed' (or `spinal')
automorphisms corresponding to the sequences $(\alpha_{n}),$ $(\beta_{n})$ --
these act as rooted automorphisms on subtrees of $\mathcal{T}$ rooted at
vertices one step away from a fixed infinite path (pictured on page 262 of
\cite{LS}).

In principle we could choose any sequence of finite simple groups; for
simplicity let us assume that $l_{n}=5$ and $T_{n}=\mathrm{Alt}(5)$ for all
$n$. Put%
\[
\alpha=(123),~\beta=(12345).
\]
Let $\lambda\in\{0,1\}^{\mathbb{N}_{0}}$ and set%
\begin{align*}
\alpha_{n}  &  =\alpha,\beta_{n}=\beta\text{ if }\lambda_{n}=0\\
\alpha_{n}  &  =\beta,\beta_{n}=\alpha\text{ if }\lambda_{n}=1.
\end{align*}
Let $P=\mathrm{Alt}(5)\times\mathrm{Alt}(5),$ \thinspace$x=(\alpha,\beta),$
$y=(\beta,\alpha)\in P$. Then $P=\left\langle x,y\right\rangle ,$ and we
define $\phi_{n}:P\rightarrow T_{n}$ by%
\begin{align*}
x\phi_{n}  &  =\alpha^{1-\lambda_{n}}\cdot\beta^{\lambda_{n}}\\
y\phi_{n}  &  =\alpha^{\lambda_{n}}\cdot\beta^{1-\lambda_{n}},
\end{align*}
thus $\phi_{n}$ is simply the projection of $P=\mathrm{Alt}(5)\times
\mathrm{Alt}(5)$ onto either the first or the second direct factor.

Let $\Gamma(\lambda)=\left\langle \xi(\lambda),\eta(\lambda),a(\lambda
),b(\lambda)\right\rangle $ denote the group $\Gamma$ constructed as above
using the sequence $\lambda$. There are $2^{\aleph_{0}}$ such sequences, so we
have constructed $2^{\aleph_{0}}$ $4$-generator subgroups of $\mathrm{Aut}%
(\mathcal{T})$ with profinite completion $W$. (These groups are of course
residually finite since $\mathrm{Aut}(\mathcal{T})$ is.)

\bigskip\ \ \ \ \ \ \ \ 

\textbf{Claim:} \emph{For each sequence} $\lambda$\emph{, the set}
$S(\lambda):=\{\mu\mid$ $\Gamma(\lambda)\cong\Gamma\left(  \mu\right)  \}$
\emph{is countable.}

\bigskip

The claim implies that the number of isomorphism classes among the groups
$\Gamma(\lambda)$ is still $2^{\aleph_{0}}$, and yields

\begin{theorem}
There are continuously many pairwise non-isomorphic $4$-generator residually
finite groups all having the iterated wreath product $W$ as their profinite completion.
\end{theorem}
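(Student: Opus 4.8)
Granted the Claim, the theorem is immediate. Each $\Gamma(\lambda)$ is a $4$-generator group; it is residually finite because $\mathrm{Aut}(\mathcal{T})$ is; and $\widehat{\Gamma(\lambda)}\cong W$ by the results quoted from \cite{LS}. The relation $\Gamma(\lambda)\cong\Gamma(\mu)$ partitions the set of $2^{\aleph_{0}}$ sequences $\lambda$ into classes, each of which is some $S(\lambda)$ and hence countable; so there are $2^{\aleph_{0}}$ classes, and choosing one representative from each gives the required family. Everything thus rests on the Claim.

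To prove the Claim, fix $\lambda$ and let $F$ be the free group on four generators. For each sequence $\mu$ let $\pi_{\mu}:F\to\mathrm{Aut}(\mathcal{T})$ be the homomorphism sending the four free generators to $\xi(\mu),\eta(\mu),a(\mu),b(\mu)$; its image is $\Gamma(\mu)$, and we put $R_{\mu}=\ker\pi_{\mu}$, so that $F/R_{\mu}\cong\Gamma(\mu)$. If $\mu\in S(\lambda)$, then composing $F\twoheadrightarrow F/R_{\mu}\cong\Gamma(\mu)\cong\Gamma(\lambda)$ exhibits $R_{\mu}$ as the kernel of an epimorphism $F\to\Gamma(\lambda)$; since a homomorphism out of $F$ is determined by the images of its four generators and $\Gamma(\lambda)$ is countable, there are only countably many such epimorphisms, and hence only countably many possibilities for $R_{\mu}$. (This is precisely the device applied to $\Gamma^{\ast}$ in Section 1.) It therefore suffices to show that $\mu\mapsto R_{\mu}$ is injective, i.e. that $\mu$ is recoverable from the group $\Gamma(\mu)$ equipped with its ordered quadruple of generators $(\xi(\mu),\eta(\mu),a(\mu),b(\mu))$; this is the analogue here of Lemma \ref{ann}.

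For this last point we produce, for each $n\geq0$, a single word $w_{n}$ in four letters, the same for all $\mu$, such that the automorphism $w_{n}(\xi(\mu),\eta(\mu),a(\mu),b(\mu))$ has order $3$ when $\mu_{n}=0$ and order $5$ when $\mu_{n}=1$. Given this, if $R_{\mu}=R_{\nu}$ then the order of the image of $w_{n}$ in the common group $F/R_{\mu}=F/R_{\nu}$ is determined both by $\mu_{n}$ and by $\nu_{n}$, forcing $\mu_{n}=\nu_{n}$ for all $n$, hence $\mu=\nu$; together with the previous paragraph this makes $S(\lambda)$ countable. For $n=0$ take $w_{0}$ to be the first generator: $\xi(\mu)$ is the rooted automorphism permuting the five subtrees at the root as $\alpha_{0}=x\phi_{0}$, which is $(123)$ if $\mu_{0}=0$ and $(12345)$ if $\mu_{0}=1$, and is trivial below level $1$, so $|\xi(\mu)|=|\alpha_{0}|\in\{3,5\}$. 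For $n\geq1$ one descends the fixed spine: by forming suitable commutators of $a(\mu)$ with the rooted generators $\xi(\mu),\eta(\mu)$ and with conjugates of $a(\mu)$, one isolates an element of $\Gamma(\mu)$ which acts on one subtree rooted at a level-$n$ vertex adjacent to the spine as the rooted automorphism given by $\alpha_{n}$ and trivially elsewhere, and which therefore has order $|\alpha_{n}|\in\{3,5\}$; the recipe is uniform in $\mu$ because $a(\mu)$ and $b(\mu)$ are constructed by one fixed rule, only the choice $\alpha_{n}\in\{(123),(12345)\}$ varying with $\mu$. Writing $w_{n}$ down explicitly and checking its order means unwinding the definition of the directed automorphisms on pages 262--263 of \cite{LS}; this verification is the one genuinely technical step, the remainder being the soft counting above.
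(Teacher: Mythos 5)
Your reduction of the theorem to the Claim is identical to the paper's and is fine. Your proof of the Claim, however, takes a genuinely different route from the paper's, and it has a gap at the crucial step.

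Your strategy is: pass to the free group $F$ on four letters, observe that for $\mu\in S(\lambda)$ the kernel $R_\mu$ ranges over a countable set (kernels of epimorphisms $F\to\Gamma(\lambda)$), and then argue that $\mu\mapsto R_\mu$ is injective by exhibiting, for each $n$, a fixed word $w_n$ whose image in $F/R_\mu$ has order $3$ or $5$ according as $\mu_n=0$ or $\mu_n=1$. The soft counting is correct and is indeed the same device used for $\Gamma^*$ in Section~1. But the existence of the words $w_n$ for $n\geq1$ is asserted, not proved, and it is not obvious. The difficulty you gloss over is that the conjugating elements you would use to ``isolate'' the section of $a(\mu)$ at level $n$ are themselves words in $\xi(\mu),\eta(\mu),a(\mu),b(\mu)$, and these generators all vary with $\mu$: the rooted generators $\xi(\mu),\eta(\mu)$ permute the level-one subtrees by $\alpha_0(\mu)$ and $\beta_0(\mu)$, so conjugating $a(\mu)$ by them moves the spine to different vertices for different $\mu$. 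Hence a single word in $F$ does \emph{not} obviously produce, uniformly over $\mu$, an element supported on a fixed subtree acting by $\alpha_n(\mu)$. Your parenthetical ``the recipe is uniform in $\mu$ because $a(\mu)$ and $b(\mu)$ are constructed by one fixed rule'' overlooks exactly this $\mu$-dependence of the rooted generators, and so the claimed injectivity of $\mu\mapsto R_\mu$ is not established.

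The paper avoids this entirely by working inside the fixed profinite group $W$: an isomorphism $\Gamma(\mu)\cong\Gamma(\lambda)$ extends to a continuous automorphism $\sigma_\mu$ of $W$; a pigeonhole argument on the countable set $\{a(\mu)^{\sigma}\}$ produces $\mu\neq\nu$ with $a(\mu)^{\sigma_\mu\sigma_\nu^{-1}}=a(\nu)$; the level stabilizers $\mathrm{St}_W(n)=\overline{W^{e^n}}$ are topologically characteristic, so $\sigma_\mu\sigma_\nu^{-1}$ descends to the finite quotient $W_n$; and there the coprimality of $|\alpha|=3$ and $|\beta|=5$ gives a contradiction, since an automorphism of $\mathrm{Alt}(5)^{(5^{n-1})}$ must permute the simple factors and preserve element orders. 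This uses the same arithmetic input as your plan (orders $3$ versus $5$), but it dodges all explicit word manipulation. If you want to salvage your approach you would need to actually unwind the spinal construction in \cite{LS} and verify that a single $w_n$ works for all $\mu$; until that is done, the argument is incomplete.
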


To establish the claim, we suppose that $S(\lambda)$ is uncountable, and aim
to derive a contradiction.\ 

For $\mu\in S(\lambda)$ let $\theta_{\mu}:\Gamma(\mu)\rightarrow\Gamma\left(
\lambda\right)  $ be an isomorphism. Then $\theta_{\mu}$ extends to a
continuous automorphism $\sigma_{\mu}$ of $W$ (universal property of profinite completions).

Now the set%
\[
\left\{  a(\mu)^{\sigma}\mid\mu\in S(\lambda),~\sigma\in\mathrm{Aut}%
(W)\right\}  \subseteq\Gamma\left(  \lambda\right)
\]
is countable because $\Gamma\left(  \lambda\right)  $ is a finitely generated
group. Hence there exists $c\in\Gamma\left(  \lambda\right)  $ such that the
set%
\[
X:=\left\{  \mu\in S(\lambda)\mid a(\mu)^{\sigma_{\mu}}=c\right\}
\]
is uncountable (all we \emph{need} is: of cardinality at least $2$).

One verifies easily that for each $n,$%
\[
\mathrm{St}_{W}(n)=\overline{W^{e^{n}}}%
\]
where $e=30$ is the exponent of $\mathrm{Alt}(5)$; thus $\mathrm{St}_{W}(n)$
is a topologically characteristic subgroup of $W$.

Let $\mu\neq\nu\in X$. Then%
\[
a(\mu)^{\sigma_{\mu}\sigma_{\nu}^{-1}}=a(\nu).
\]
Now for some $n$ we have $\mu_{n}\neq\nu_{n}$. Say $a(\mu)_{n}=\alpha$ and
$a(\nu)_{n}=\beta$. The continuous automorphism $\sigma_{\mu}\sigma_{\nu}%
^{-1}$ of $W$ fixes both $\mathrm{St}_{W}(n)$ and $\mathrm{St}_{W}(n-1),$ and
therefore induces an automorphism $\tau$ on the quotient
\[
W_{n}=\mathrm{Alt}(5)^{(5^{n})}\rtimes W_{n-1}%
\]
sending the coset of $a(\mu)$ to that of $a(\nu):$%
\[
(1,\ldots,1,\alpha,1,1,1,1)\cdot u\overset{\tau}{\longmapsto}(1,\ldots
,1,\beta,1,1,1,1)\cdot v
\]
in an obvious notation (here, $u$ and $v$ lie in the stabilizer of the point
$5^{n}-4$). This now implies that%
\[
(1,\ldots,1,\beta,1,1,1,1)=(\ast,\ldots,\ast,\alpha^{z},\ast,\ldots,\ast)
\]
for some automorphism $z$ of $\mathrm{Alt}(5)$. This is impossible since
$\alpha$ and $\beta$ have coprime orders.


\begin{thebibliography}{999}                                                                                              %


\bibitem[GZ]{GZ}F. Grunewald and P. Zalesskii, Genus for groups, \emph{J.
Algebra }\textbf{326} (2011), 130-168.

\bibitem[KKN]{KKN}A. Kar, P. Kropholler and N. Nikolov, On growth of homology
torsion in amenable groups, \emph{Math. Proc. Camb. Phil. Soc}. \textbf{162}
(2017), 337-351.

\bibitem[LS]{LS}A. Lubotzky and D. Segal, \emph{Subgroup Growth,}
Birkh\"{a}user Verlag, Basel--Boston--Berlin, 2003.

\bibitem[N]{N}V. Nekrashevych, An uncountable family of 3-generated groups
with isomorphic profinite completions, \emph{arXiv}:\textbf{1303.5782v2}
[math.GR] (2013)

\bibitem[P1]{P1}P. F. Pickel, Finitely generated nilpotent groups with
isomorphic finite quotients. \emph{Trans. Amer. Math. Soc.} \textbf{160}
(1971), 327-341.

\bibitem[P2]{P2}P. F. Pickel, Metabelian groups with the same finite
quotients. \emph{Bull. Austral. Math. Soc.} \textbf{11} (1974), 115--120.

\bibitem[P]{P}L. Pyber, Groups of intermediate subgroup growth and a problem
of Grothendieck,\emph{ Duke Math. J.} \textbf{121} (2004), 169--188.

\bibitem[S]{S}D. Segal, The finite images of finitely generated groups,
\emph{Proc. London Math. Soc}. \textbf{82} (2001), 597--613.
\end{thebibliography}
\end{document}